\newtheorem{theorem}{Theorem}[section]
\newtheorem{lemma}[theorem]{Lemma}
\newtheorem{corollary}[theorem]{Corollary}
\theoremstyle{definition}
\theoremstyle{remark}
\numberwithin{equation}{section}
\def\R{{\mathbb R}}
\def\N{{\mathbb N}}
\def\A{{\mathscr A}}
\newcommand{\be}{\begin{equation}}
\newcommand{\bi}{\begin{itemize}}
\newcommand{\ei}{\end{itemize}}
\newcommand{\ee}{\end{equation}}
\newcommand{\ba}{\begin{eqnarray}}
\newcommand{\bs}{\begin{eqnarray*}}
\newcommand{\ea}{\end{eqnarray}}
\newcommand{\es}{\end{eqnarray*}}
\begin{document}

\title{On the continuity of global attractors}

%    Information for first author
\author{Luan T.\ Hoang}
%    Address of record for the research reported here
\address{Department of Mathematics and Statistics,
Texas Tech University,
Box 41042,
Lubbock, TX 79409-1042. USA.}
%    Current address
\email{luan.hoang@ttu.edu}
%    \thanks will become a 1st page footnote.
%\thanks{RL was supported in part by XXX.}

\author{Eric J.\ Olson}
\address{Department of
  Mathematics/084, University of Nevada, Reno, NV 89557. USA.}
\email{ejolson@unr.edu}
%\thanks{EJO was supported by EP/G007470/1.}

%    Information for second author
\author{James C.\ Robinson}
\address{Mathematics Institute, University of Warwick, Coventry CV4 7AL. UK.}
\email{j.c.robinson@warwick.ac.uk}
\thanks{JCR was supported by an EPSRC Leadership Fellowship EP/G007470/1, which supported the time spent in Warwick by LTH and EJO}

%    General info
\subjclass[2000]{Primary 35B41}

%\date{January 1, 2001 and, in revised form, June 22, 2001.}

%\dedicatory{This paper is dedicated to our advisors.}

\keywords{Global attractor, continuity, Baire one function, equi-attraction, Dini's Theorem}

\begin{abstract}
Let $\Lambda$ be a complete metric space, and let $\{S_\lambda(\cdot):\ \lambda\in\Lambda\}$ be a parametrised family of semigroups with global attractors $\A_\lambda$. We assume that there exists a fixed bounded set $D$ such that $\A_\lambda\subset D$ for every $\lambda\in\Lambda$. By viewing the attractors as the limit as $t\to\infty$ of the sets $S_\lambda(t)D$, we give simple proofs of the equivalence of `equi-attraction' to continuity (when this convergence is uniform in $\lambda$) and show that the attractors $\A_\lambda$ are continuous in $\lambda$ at a residual set of parameters in the sense of Baire Category (when the convergence is only pointwise).
\end{abstract}

\maketitle

\section{Global attractors}

The global attractor of a dynamical system is the unique compact invariant set that attracts the trajectories starting in any bounded set at a uniform rate. Introduced by Billotti \& LaSalle \cite{BLS},
they have been the subject of much research since the mid-1980s, and form the central topic of a number of monographs, including Babin \& Vishik \cite{BV}, Hale \cite{Hale}, Ladyzhenskaya \cite{Ladyz}, Robinson \cite{JCR}, and Temam \cite{Temam}.

The standard theory incorporates existence results \cite{BLS}, upper semicontinuity \cite{HLR}, and bounds on the attractor dimension \cite{CFT}. Global attractors exist for many infinite-dimensional models \cite{Temam}, with familiar low-dimensional ODE models such as the Lorenz equations providing a testing ground for the general theory \cite{DG}.

While upper semicontinuity with respect to perturbations is easy to prove, lower semicontinuity (and hence full continuity) is more delicate, requiring structural assumptions on the attractor or the assumption of a uniform attraction rate. However, Babin \& Pilyugin \cite{BPY} proved that the global attractor of a parametrised set of semigroups is continuous at a residual set of parameters, by taking advantage of the known upper semicontinuity and then using the fact that upper semicontinuous functions are continuous on a residual set.

Here we reprove results on equi-attraction and residual continuity in a more direct way, which also serves to demonstrate more clearly why these results are true. Given equi-attraction the attractor is the uniform limit of a sequence of continuous functions, and hence continuous (the converse requires a generalised version of Dini's Theorem); more generally, it is the pointwise limit of a sequence of continuous functions, i.e.\ a `Baire one' function, and therefore the set of continuity points forms a residual set.

\section{Semigroups and attractors}

A semigroup $\{S(t)\}_{t\ge0}$ on a complete metric space $(X,d)$ is a collection of maps $S(t):X\to X$ such that
\begin{itemize}
\item[(S1)] $S(t)0={\rm id}$;
\item[(S2)] $S(t+s)=S(t)S(s)=S(s)S(t)$ for all $t,s\ge0$; and
\item[(S3)] $S(t)x$ is continuous in $x$ and $t$.
\end{itemize}
A compact set $\A\subset X$ is the \emph{global attractor} for $S(\cdot)$ if
\begin{itemize}
\item[(A1)] $S(t)\A=\A$ for all $t\in\R$; and
\item[(A2)] for any bounded set $B$, $\rho_X(S(t)B,\A)\to0$ as $t\to\infty$, where $\rho_X$ is the semi-distance $
\rho_X(A,B)=\sup_{a\in A}\inf_{b\in B}d(a,b)$.
\end{itemize}
When such a set exists it is unique, the maximal compact invariant set, and the minimal closed set that satisfies (A2).

\section{Upper and lower semicontinuity}

Let $\Lambda$ be a complete metric space and $S_\lambda(\cdot)$ a parametrised family of semigroups on $X$. Suppose that
\begin{itemize}
\item[(L1)] $S_\lambda(\cdot)$ has a global attractor $\A_\lambda$ for every $\lambda\in\Lambda$;
\item[(L2)] there is a bounded subset $D$ of $X$ such that $\A_\lambda\subset D$ for every $\lambda\in\Lambda$; and
\item[(L3)] for $t>0$, $S_\lambda(t)x$ is continuous in $\lambda$, uniformly for $x$ in bounded subsets of $X$.
\end{itemize}
We can strengthen (L2) and weaken (L3) by replacing `bounded' by `compact' to yield conditions (L2$'$) and (L3$'$). A wide range of dissipative systems with parameters satisfy these assumptions, for example the 2D Navier--Stokes equations, the scalar Kuramoto--Sivashinsky equation, reaction-diffusion equations, and the Lorenz equations, all of which are covered in \cite{Temam}.

Under these mild assumptions it is easy to show that $\A_\lambda$ is upper semicontinuous,
 $$
 \rho_X(\A_\lambda,\A_{\lambda_0})\to0\qquad\mbox{as}\quad\lambda\to\lambda_0
 $$
 see \cite{BV,BPY,Chu,Hale,HLR,JCR,Temam}, for example. However, lower semicontinuity, that is
 $$
  \rho_X(\A_{\lambda_0},\A_{\lambda})\to0\qquad\mbox{as}\quad\lambda\to\lambda_0,
 $$
requires more: either structural conditions on the attractor $\A_{\lambda_0}$ ($\A_{\lambda_0}$ is the closure of the unstable manifolds of a finite number of hyperbolic equilibria, see Hale \& Raugel\ \cite{HR}, Stuart \& Humphries \cite{SH}, or Robinson \cite{JCR}) or the `equi-attraction' hypothesis of Li \& Kloeden \cite{KL} (see Section \ref{EA}). As a result, the continuity of attractors,
$$
\lim_{\lambda\to\lambda_0}d_{\rm H}(\A_\lambda,\A_{\lambda_0})=0,
$$
where
\begin{equation}\label{sHd}
d_{\rm H}(A,B)=\max(\rho_X(A,B),\rho_X(B,A))
\end{equation}
is the symmetric Hausdorff distance, is only known under restrictive conditions.

In this paper we view $\A_\lambda$ as a function from $\Lambda$ into the space of closed bounded subsets of $X$, given as the limit of the continuous functions $\overline{S_\lambda(t_n)D}$ (see Lemma \ref{Dncts}). Indeed, note that given any set $D\supseteq\A_\lambda$ it follows from the invariance of the attractor (A1) that
$$
\overline{S_\lambda(t)D}\supseteq S_\lambda(t)D\supseteq S_\lambda(t)\A_\lambda=\A_\lambda\qquad\mbox{for every}\quad t>0,
$$
and so the the attraction property of the attractor in (A2) implies that
\begin{equation}\label{limit}
d_{\rm H}(\overline{S_\lambda(t)D},\A_\lambda)\to0\qquad\mbox{as}\qquad t\to\infty.
\end{equation}

Uniform convergence (with respect to $\lambda$) in (\ref{limit}) is essentially the `equi-attraction' introduced in \cite{KL}, and thus clearly related to continuity of the limiting function $\A_\lambda$ (Section \ref{EA}). Given only pointwise ($\lambda$-by-$\lambda$) convergence in (\ref{limit}) we can still use the result  from the theory of Baire Category that the pointwise limit of continuous functions (a `Baire one function') is continuous at a residual set to guarantee that $\A_\lambda$ is continuous in $\lambda$ on a residual subset of $\Lambda$ (Section \ref{BC}).

For both results the following simple lemma is fundamental. We let $CB(X)$ be the collection of all closed and bounded subsets of $X$, and use the symmetric Hausdorff distance $d_{\rm H}$ defined in (\ref{sHd}) as the metric on this space.

\begin{lemma}\label{Dncts}
Suppose that $D$ is bounded and that {\rm(L3)} holds. Then for any $t>0$ the map $\lambda\mapsto \overline{S_\lambda(t)D}$ is continuous from $\Lambda$ into $CB(X)$. The same is true if $D$ is compact and {\rm(L3$'$)} holds.
\end{lemma}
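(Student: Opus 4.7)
The symmetric Hausdorff distance splits into two one-sided semi-distances, so it suffices to show that both $\rho_X(\overline{S_\lambda(t)D},\overline{S_{\lambda_0}(t)D})$ and the reverse tend to $0$ as $\lambda\to\lambda_0$. The entire strategy is to observe that hypothesis (L3) is essentially the conclusion already, rephrased pointwise rather than set-valued: given $\varepsilon>0$, there is a neighbourhood $U$ of $\lambda_0$ on which
\[
\sup_{x\in D}\,d\bigl(S_\lambda(t)x,\,S_{\lambda_0}(t)x\bigr)<\varepsilon.
\]

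The core step is then a ``diagonal'' pairing. For each $x\in D$, the point $S_\lambda(t)x\in S_\lambda(t)D$ has the witness $S_{\lambda_0}(t)x\in S_{\lambda_0}(t)D$ within distance $\varepsilon$, so $\rho_X(S_\lambda(t)D,S_{\lambda_0}(t)D)\le\varepsilon$; swapping the roles of $\lambda$ and $\lambda_0$ bounds the reverse semi-distance by $\varepsilon$ as well, whence $d_{\rm H}(S_\lambda(t)D,S_{\lambda_0}(t)D)\le\varepsilon$. Passing to closures is free: since $a\mapsto d(a,B)$ is $1$-Lipschitz and equals $d(a,\overline{B})$, taking suprema over $\overline{A}$ versus $A$ gives the same value, i.e.\ $\rho_X(\overline{A},\overline{B})=\rho_X(A,B)$. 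Hence $d_{\rm H}(\overline{S_\lambda(t)D},\overline{S_{\lambda_0}(t)D})\le\varepsilon$ throughout $U$, giving the required continuity.

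The compact variant under (L3$'$) goes through with no change, since the only place we used boundedness of $D$ was in invoking (L3), and (L3$'$) plays the same role for compact $D$. The lone bookkeeping item is checking that $\overline{S_\lambda(t)D}$ actually lies in $CB(X)$: in the compact case $S_\lambda(t)D$ is the continuous image of a compact set and so automatically bounded; in the bounded case, boundedness of $S_\lambda(t)D$ near $\lambda_0$ follows from the uniform estimate of (L3) once $S_{\lambda_0}(t)D$ is known to be bounded, which the dissipative framework (with $\A_{\lambda_0}\subset D$) supplies. I anticipate no serious obstacle here; the whole lemma is in effect a direct translation of (L3) into the language of the Hausdorff metric.
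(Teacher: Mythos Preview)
Your proof is correct and follows essentially the same route as the paper: use (L3) to get $d_X(S_\lambda(t)x,S_{\lambda_0}(t)x)<\varepsilon$ uniformly over $x\in D$, pair each $S_\lambda(t)x$ with $S_{\lambda_0}(t)x$ to bound both semi-distances by $\varepsilon$, and pass to closures. Your additional remarks on why $\rho_X(\overline A,\overline B)=\rho_X(A,B)$ and on membership in $CB(X)$ are more explicit than the paper, but the argument is the same.
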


\begin{proof}
  Given $t>0$, $\lambda_0\in\Lambda$, and $\epsilon>0$, (L3) ensures that there exists a $\delta>0$ such that $d_\Lambda(\lambda_0,\lambda)<\delta$ implies that $d_X(S_\lambda(t)x,S_{\lambda_0}(t)x)<\epsilon$ for every $x\in D$. It follows that
  $$
  \rho_X(S_\lambda(t)D,S_{\lambda_0}(t)D)<\epsilon\qquad\mbox{and}\qquad\rho_X(S_{\lambda_0}(t)D,S_{\lambda}(t)D)<\epsilon,
  $$
  and so
   $$
  \rho_X(\overline{S_\lambda(t)D},\overline{S_{\lambda_0}(t)D})\le\epsilon\qquad\mbox{and}\qquad\rho_X(\overline{S_{\lambda_0}(t)D},\overline{S_{\lambda}(t)D})\le\epsilon,
  $$
  from which $d_{\rm H}(\overline{S_\lambda(t)D},\overline{S_{\lambda_0}(t)D})\le\epsilon$ as required.\end{proof}

\section{Uniform convergence: continuity and equi-attraction}\label{EA}

First we give a simple proof of the results in \cite{KL} on the equivalence between equi-attraction and continuity. In our framework these follow from two classical results: the continuity of the uniform limit of a sequence of continuous functions and Dini's Theorem in an abstract formulation.

Li \& Kloeden require (L1), (L2'), a time-uniform version of (L3') (see Corollary \ref{Cor}), and in addition an `equi-dissipative' assumption that there exists a bounded absorbing set $K$ such that
\begin{equation}\label{equiD}
S_\lambda(t)B\subset K\qquad\mbox{for every}\quad t\ge t_B,
\end{equation}
where $t_B$ does not depend on $\lambda$. They then show that when $\Lambda$ is compact, continuity of $\A_\lambda$ in $\lambda$ is equivalent to equi-attraction,
  \begin{equation}\label{unifKL}
  \lim_{t\to\infty}\sup_{\lambda\in\Lambda}\rho_X(S_\lambda(t)D,{\mathscr A}_\lambda)\to0\qquad\mbox{as}\quad t\to\infty.
  \end{equation}

  We now give our version of Dini's Theorem.

\begin{theorem}\label{Dini}
 For each $n\in\N$ let $f_n:K\to Y$ be a continuous map, where $K$ is a compact metric space and $Y$ is any metric space. If $f$ is continuous and is the monotonic pointwise limit of $f_n$, i.e.\ for every $x\in K$
 $$
 d_Y(f_n(x),f(x))\to0\quad\mbox{as}\quad n\to\infty\quad\mbox{and}\quad d_Y(f_{n+1}(x),f(x))\le d_Y(f_n(x),f(x))
 $$
 then $f_n$ converges uniformly to $f$.
\end{theorem}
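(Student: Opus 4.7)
The plan is to reduce to the classical (real-valued) Dini theorem by considering the real-valued error function. Define
\[
g_n(x) := d_Y(f_n(x),f(x)).
\]
Since $f_n$ and $f$ are continuous from $K$ into $Y$ and the metric $d_Y:Y\times Y\to\R$ is continuous, each $g_n$ is a continuous map $K\to\R$. The hypotheses say exactly that $g_n(x)\to 0$ pointwise on $K$ and that the sequence $g_{n+1}(x)\le g_n(x)$ is monotonically decreasing to the continuous limit function $0$. So I have reduced the claim to the classical statement: a monotone sequence of continuous real-valued functions on a compact set converging pointwise to a continuous function does so uniformly.

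I would then either invoke classical Dini directly, or, to keep the proof self-contained, reproduce its standard compactness argument. Given $\epsilon>0$, for each $x\in K$ pointwise convergence gives an $N_x\in\N$ with $g_{N_x}(x)<\epsilon$; continuity of $g_{N_x}$ yields an open neighborhood $U_x\ni x$ on which $g_{N_x}(y)<\epsilon$. The family $\{U_x\}_{x\in K}$ is an open cover of the compact space $K$, so extract a finite subcover $U_{x_1},\ldots,U_{x_k}$ and set $N:=\max_{1\le i\le k}N_{x_i}$. For any $n\ge N$ and any $y\in K$, choose $i$ with $y\in U_{x_i}$; by monotonicity
\[
g_n(y)\le g_N(y)\le g_{N_{x_i}}(y)<\epsilon.
\]
Thus $\sup_{y\in K}d_Y(f_n(y),f(y))=\sup_{y\in K}g_n(y)\le\epsilon$ for all $n\ge N$, which is uniform convergence of $f_n$ to $f$.

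There is no real obstacle here: the one thing to notice is that, although $Y$ is an arbitrary metric space so there is no intrinsic order on the values $f_n(x)$, the hypothesis imposes monotonicity on the \emph{distances} $d_Y(f_n(x),f(x))$, and that is precisely the data the classical Dini argument needs. The continuity of $f$ is used implicitly to guarantee that the $g_n$ (rather than just the $f_n$) are continuous, so that the finite subcover step goes through.
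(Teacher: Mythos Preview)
Your proof is correct and follows essentially the same route as the paper: both arguments work with the continuous real-valued error $g_n(x)=d_Y(f_n(x),f(x))$ and extract a finite subcover from a suitable open cover of the compact set $K$. The only cosmetic difference is that the paper indexes its cover by $n$ via the nested open sets $E_n=\{x:g_n(x)<\epsilon\}$, whereas you index by points via neighbourhoods $U_x$; both are standard variants of the classical Dini argument.
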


\begin{proof}
  Given $\epsilon>0$ define
  $$
  E_n=\{x\in K:\ d_Y(f_n(x),f(x))<\epsilon\}.
  $$
  Since $f_n$ and $f$ are both continuous, $E_n$ is open and non-decreasing. Since $K$ is compact and $\cup_{n=1}^\infty E_n$ provides an open cover of $K$, there exists an $N(\epsilon)$ such that $K=\cup_{n=1}^N E_n$, and so $d_Y(f_n(x),f(x))<\epsilon$ for all $x\in K$ for all $n\ge N(\epsilon)$.
\end{proof}

Our first result relates continuity to a slightly weakened form of equi-attraction through sequences. We remark that our proof allows us to dispense with the `equi-dissipative' assumption (\ref{equiD}) of \cite{KL}.

\begin{theorem}\label{cty}
  Assume {\rm(L1)} and {\rm(L2--3)} or {\rm(L2$'$--3$'$)}. If there exist $t_n\to\infty$ such that
  \begin{equation}\label{unif}
  \lim_{n\to\infty}\sup_{\lambda\in\Lambda}\rho_X(S_\lambda(t_n)D,{\mathscr A}_\lambda)\to0\qquad\mbox{as}\quad t\to\infty,
  \end{equation}
  then $\A_\lambda$ is continuous in $\lambda$ for all $\lambda\in\Lambda$. Conversely, if $\Lambda$ is compact then
%
 % and the convergence in {\rm(L3)} or {\rm(L3$'$)} is also uniform for $t$ in compact subsets of %$[0,\infty)$, then
 % , if there exists a bounded set $D\supset\A_\lambda$ such that
%  $$
%  S_\lambda(T)D\subset D\qquad\mbox{for all}\quad\lambda\in\Lambda
%  $$
%  then
  continuity of $\A_\lambda$ for all $\lambda\in\Lambda$ implies that there exist $t_n\to\infty$ such that (\ref{unif}) holds.
\end{theorem}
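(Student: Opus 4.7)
The plan is to view $\lambda\mapsto\overline{S_\lambda(t)D}$ as a continuous function from $\Lambda$ into $CB(X)$ (by Lemma~\ref{Dncts}) that converges pointwise to $\lambda\mapsto\mathscr{A}_\lambda$ via~(\ref{limit}), and apply two classical principles: uniform limit of continuous maps is continuous for the forward direction, and Theorem~\ref{Dini} for the converse.

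For the forward implication I would set $f_n(\lambda):=\overline{S_\lambda(t_n)D}$, continuous by Lemma~\ref{Dncts}. Because $\mathscr{A}_\lambda\subseteq D$ and $\mathscr{A}_\lambda$ is invariant, $\overline{S_\lambda(t)D}\supseteq\mathscr{A}_\lambda$, so the semi-distance $\rho_X(\mathscr{A}_\lambda,\overline{S_\lambda(t)D})$ vanishes and
$$
d_{\rm H}\bigl(f_n(\lambda),\mathscr{A}_\lambda\bigr)=\rho_X\bigl(S_\lambda(t_n)D,\mathscr{A}_\lambda\bigr).
$$
Hypothesis (\ref{unif}) then says exactly that $f_n$ converges uniformly on $\Lambda$ to $\lambda\mapsto\mathscr{A}_\lambda$ in $d_{\rm H}$, and a uniform limit of continuous maps between metric spaces is continuous.

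For the converse, with $\Lambda$ compact and $\mathscr{A}_{(\cdot)}$ continuous, I would apply Theorem~\ref{Dini} with $K=\Lambda$, $Y=CB(X)$, $f(\lambda)=\mathscr{A}_\lambda$ and $f_n(\lambda)=\overline{S_\lambda(t_n)D}$. The continuity hypotheses are met (Lemma~\ref{Dncts} and the standing assumption) and pointwise $d_{\rm H}$-convergence is (\ref{limit}), so everything reduces to choosing $t_n\to\infty$ so that the monotonicity hypothesis of Theorem~\ref{Dini},
$$
d_{\rm H}\bigl(\overline{S_\lambda(t_{n+1})D},\mathscr{A}_\lambda\bigr)\le d_{\rm H}\bigl(\overline{S_\lambda(t_n)D},\mathscr{A}_\lambda\bigr)\qquad\text{for every }\lambda\in\Lambda,
$$
holds. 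I would build $t_n$ inductively: given $t_{n-1}$, for each $\lambda_0\in\Lambda$ use~(A2) to pick $T(\lambda_0)>t_{n-1}$ for which $d_{\rm H}(\overline{S_{\lambda_0}(T(\lambda_0))D},\mathscr{A}_{\lambda_0})$ is strictly smaller than $d_{\rm H}(\overline{S_{\lambda_0}(t_{n-1})D},\mathscr{A}_{\lambda_0})$; propagate this strict inequality to an open neighbourhood $V_{\lambda_0}$ of $\lambda_0$ via continuity in $\lambda$ of both $\lambda\mapsto\overline{S_\lambda(T(\lambda_0))D}$ and $\lambda\mapsto\mathscr{A}_\lambda$; extract a finite subcover $\{V_{\lambda_0^{(1)}},\dots,V_{\lambda_0^{(N)}}\}$ by compactness; and combine the candidate times $T(\lambda_0^{(j)})$ into a single $t_n$ using the full tail control in~(A2).

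The main obstacle is that final combining step. Turning ``for each $\lambda$ some $T(\lambda_0^{(j)})$ from a finite list works'' into ``a single $t_n$ works for every $\lambda\in\Lambda$'' is not automatic in the absence of equi-dissipation, and it is where the argument has to use more than pointwise convergence — namely the full tail statement $\sup_{t\ge T}\rho_X(S_\lambda(t)D,\mathscr{A}_\lambda)\to0$ for every fixed $\lambda\in\Lambda$ contained in (A2) — in order to replace each candidate time by any larger common choice while preserving the local strict inequality on each $V_{\lambda_0^{(j)}}$. With this step settled, Theorem~\ref{Dini} delivers the uniform convergence that is exactly~(\ref{unif}).
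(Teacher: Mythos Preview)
Your forward implication is correct and is exactly the paper's argument.

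For the converse, the gap you flag in the ``combining step'' is real and your proposed fix does not close it. The tail statement from (A2) is pointwise in $\lambda$: for each fixed $\lambda$, $\rho_X(S_\lambda(t)D,\mathscr{A}_\lambda)$ is eventually small, but it need not be monotone in $t$, and the threshold time depends on $\lambda$. The strict inequality you arrange on $V_{\lambda_0^{(j)}}$ holds only at the single time $T(\lambda_0^{(j)})$ (that is all continuity in $\lambda$ at a fixed $t$ buys you); passing to $\max_j T(\lambda_0^{(j)})$ may well destroy it, and re-running the continuity argument at the new time yields a new, possibly smaller, neighbourhood --- the construction does not terminate. In effect you are asking for a locally-uniform-in-$\lambda$ tail bound, which is precisely the equi-attraction you are trying to prove.

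The paper sidesteps this by working with set inclusions rather than distances. It enlarges $D$ to its open $1$-neighbourhood $D_1$ and produces a single $T$, independent of $\lambda$, with $S_\lambda(T)D_1\subseteq D_1$ for every $\lambda\in\Lambda$: for each $\lambda_0$ attraction gives $t(\lambda_0)\in\N$ with $S_{\lambda_0}(t(\lambda_0))D_1\subseteq D$; since $D$ sits at distance $1$ inside $D_1$, (L3) propagates $S_\lambda(t(\lambda_0))D_1\subset D_1$ to a neighbourhood of $\lambda_0$; compactness reduces to finitely many $t(\lambda_k)$; and $T:=\prod_k t(\lambda_k)$ works for every $\lambda$ because $T$ is an integer multiple of the relevant $t(\lambda_k)$ and an inclusion, unlike a distance inequality, can be \emph{iterated} via the semigroup property. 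This self-mapping makes $\overline{S_\lambda(nT)D_1}$ nested decreasing, so the monotonicity hypothesis of Theorem~\ref{Dini} is automatic and $t_n=nT$ does the job. The enlargement $D\to D_1$ and the product-of-times trick are the ideas your outline is missing.
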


\begin{proof}
Lemma \ref{Dncts} guarantees that $\lambda\mapsto \overline{S_\lambda(t_n)D}$ is continuous for each $n$, and we have already observed in (\ref{limit}) that
$$
d_{\rm H}(\overline{S_\lambda(t_n)D},{\mathscr A}_\lambda)\to0\qquad\mbox{as}\quad n\to\infty.
$$
 Therefore $\A_\lambda$ is the uniform limit of the continuous functions $\overline{S_\lambda(t_n)D}$ and so is continuous itself.

  For the converse, let $D_1=\{x\in X:\ \rho_X(x,D)<1\}$. For each $\lambda_0\in\Lambda$ it follows from (A2) and (L2) that there exists a time $t(\lambda_0)\in\N$ such that $S_{\lambda_0}(t)D_1\subseteq D$ for all $t\ge t({\lambda_0})$. It follows from (L3) that there exists an $\epsilon(\lambda_0)>0$ such that
  \begin{equation}\label{first}
  S_\lambda(t(\lambda_0))D_1\subset D_1
  \end{equation}
  for every $\lambda$ with $d_\Lambda(\lambda,\lambda_0)<\varepsilon(\lambda_0)$.

      Since $\Lambda$ is compact
       $$
       \Lambda=\bigcup_{\lambda\in\Lambda}B_{\epsilon(\lambda)}(\lambda)=\bigcup_{k=1}^N B_{\epsilon(\lambda_k)}(\lambda_k)
       $$
       for some $N\in\N$ and $\lambda_k\in\Lambda$. If $T=\prod_{k=1}^N t(\lambda_k)$ then
       \begin{equation}\label{second}
       S_\lambda(T)D_1\subseteq D_1\qquad\mbox{for every}\quad\lambda\in\Lambda,
       \end{equation}
       since any $\lambda\in\Lambda$ is contained in $B_{\epsilon(\lambda_k)}(\lambda_k)$ for some $k$, and $T=mt(\lambda_k)$ for some $m\in\N$, from which (\ref{second}) follows by applying $S_\lambda(t(\lambda_k))$ repeatedly ($m-1$ times) to both sides of (\ref{first}).

       It follows from (\ref{second}) that for every $\lambda\in\Lambda$, $\overline{S_\lambda(nT)D_1}$ is a decreasing sequence of sets, and hence the convergence of $\overline{S_\lambda(nT)D_1}$ to $\A_\lambda$, ensured by (\ref{limit}), is in fact monotonic in the sense of our Theorem \ref{Dini}. Uniform convergence now follows, and finally the fact that $D\subseteq D_1$ yields
       $$
       d_{\rm H}(S_\lambda(nT)D,\A_\lambda)\le d_{\rm H}(S_\lambda(nT)D_1,\A_\lambda)\to0
       $$
       uniformly in $\lambda$ as $n\to\infty$.\end{proof}

 With additional uniform continuity assumptions we can readily show that continuity implies equi-attraction in the sense of \cite{KL}. We give one version of this result.

\begin{corollary}\label{Cor} Suppose that {\rm(L1--3)} hold and that $\Lambda$ is compact. Assume in addition that $S_\lambda(t)x$ is continuous in $x$, uniformly in $\lambda$ and for $x$ in bounded subsets of $X$ and $t\in[0,T]$ for any $T>0$. Then continuity of $\A_\lambda$ implies (\ref{unifKL}).
\end{corollary}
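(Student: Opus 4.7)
The plan is to bootstrap the conclusion of Theorem~\ref{cty}, which yields uniform convergence only along a sequence $t_n\to\infty$, into genuine equi-attraction (uniform convergence in the continuous variable $t$). The bridge between the two is supplied by the new hypothesis, which controls the spread of $S_\lambda(s)$ on bounded sets uniformly in $s\in[0,T]$ and $\lambda\in\Lambda$.

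First I would invoke Theorem~\ref{cty}: the continuity of $\A_\lambda$ together with compactness of $\Lambda$ yields times $t_n\to\infty$ with $\sup_{\lambda\in\Lambda}\rho_X(S_\lambda(t_n)D,\A_\lambda)\to 0$. From the proof I would extract two strengthenings that are not explicit in the statement but are what is actually proved there: (i) the times can be taken of the form $t_n=nT$ for a single $T>0$, and (ii) with $D_1=\{x\in X:\rho_X(x,D)<1\}$ as in that proof, $S_\lambda(nT)D\subseteq D_1$ for every $n\in\N$ and every $\lambda\in\Lambda$. Thus every orbit segment we need to compare lies in the fixed bounded set $D_1$.

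Next, for any $t\ge T$ I would write $t=nT+s$ with $n\ge 1$ and $s\in[0,T)$, so that by (S2), $S_\lambda(t)x=S_\lambda(s)S_\lambda(nT)x$. Given $\epsilon>0$, the extra hypothesis supplies a $\delta>0$ such that $d_X(x,y)<\delta$ with $x,y\in D_1$ implies $d_X(S_\lambda(s)x,S_\lambda(s)y)<\epsilon$ for all $\lambda\in\Lambda$ and all $s\in[0,T]$. Choose $N$ with $\sup_{\lambda}\rho_X(S_\lambda(nT)D,\A_\lambda)<\delta$ for $n\ge N$. For any such $n$, any $\lambda\in\Lambda$, any $s\in[0,T)$, and any $x\in S_\lambda(nT)D$, compactness of $\A_\lambda$ yields $a\in\A_\lambda$ with $d_X(x,a)<\delta$; both $x$ and $a$ lie in $D_1$, so $d_X(S_\lambda(s)x,S_\lambda(s)a)<\epsilon$. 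Invariance (A1) gives $S_\lambda(s)a\in\A_\lambda$, so taking the supremum over $x$ yields $\rho_X(S_\lambda(t)D,\A_\lambda)\le\epsilon$ for all $t\ge NT$ and all $\lambda\in\Lambda$, which is precisely~(\ref{unifKL}).

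The point that requires care is the $\lambda$-uniform confinement $S_\lambda(nT)D\subseteq D_1$: without it the uniform continuity hypothesis cannot be applied uniformly in $\lambda$, since $S_\lambda(nT)D$ could \emph{a priori} drift to parts of $X$ where the continuity modulus is not controlled. This is the delicate fact that is visible inside the proof of Theorem~\ref{cty} rather than in its statement, and invoking it is what allows the argument to go through cleanly without imposing the additional equi-dissipativity assumption (\ref{equiD}) of \cite{KL}.
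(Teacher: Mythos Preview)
Your proof is correct and follows essentially the same route as the paper: invoke Theorem~\ref{cty} to get uniform attraction along the sequence $nT$, decompose an arbitrary $t$ as $nT+s$ with $s\in[0,T)$, and use the extra uniform-continuity hypothesis together with invariance of $\A_\lambda$ to propagate the estimate. The only minor difference is that you extract the confinement $S_\lambda(nT)D\subseteq D_1$ from inside the proof of Theorem~\ref{cty}, whereas the paper obtains the needed confinement more directly from the uniform attraction itself (once $\rho_X(S_\lambda(nT)D,\A_\lambda)<\delta$ and $\A_\lambda\subset D$, the set $S_\lambda(nT)D$ already lies in a fixed bounded neighbourhood of $D$); either way the argument is the same.
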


\begin{proof}
Since $\A_\lambda\subset D$ and $\A_\lambda$ is invariant, given any $\epsilon>0$, by assumption there exists a $\delta>0$ such that
\begin{equation}\label{morecty}
d_X(d,\A_\lambda)<\delta\qquad\Rightarrow\qquad d_X(S_\lambda(\tau)d,\A_\lambda)<\epsilon
\end{equation}
 any $d\in X$, for all $\lambda\in\Lambda$ and $\tau\in(0,T)$. Choose $n_0$ sufficiently large that
$$
d_{\rm H}(S_\lambda(nT)D,\A_\lambda)<\delta\qquad\mbox{for all}\quad n\ge n_0;
$$
now for any $t\in(nT,(n+1)T)$, $n\ge n_0$, we can write $t=nT+\tau$ for some $\tau\in(0,T)$, and it follows from (\ref{morecty}) that
$$
d_{\rm H}(S_\lambda(t)D,\A_\lambda)<\epsilon\qquad t\ge n_0T,
$$
with the convergence uniform in $\lambda$ as required.\end{proof}

\section{Pointwise convergence and residual continuity}\label{BC}

When the rate of attraction to $\A_\lambda$ is not uniform in $\lambda$ we nevertheless have the convergence in (\ref{limit}) for each $\lambda$. In general, therefore, one can view the attractor as the `pointwise' ($\lambda$-by-$\lambda$) limit of the sequence of continuous functions $\overline{S_\lambda(t)D}$. In the case of a sequence of continuous real functions, their pointwise limit is a `Baire one function', and is continuous on a residual set. The same ideas in a more abstract setting yield continuity of $\A_\lambda$ on a residual subset of $\Lambda$.

We use the following abstract result, characterising the continuity of `Baire one' functions, whose proof (which we include for completeness) is an easy variant of that given by Oxtoby \cite{Oxtoby}. A result in the same general setting as here can be found as Theorem 48.5 in Munkres \cite{Munk}. Recall that a set is \emph{nowhere dense} if its closure contains no open sets, and a set is \emph{residual} if its complement is the countable union of nowhere dense sets. Any residual set is dense.

\begin{theorem}\label{BCT}
  For each $n\in\N$ let $f_n:\Lambda\to Y$ be a continuous map, where $\Lambda$ is a complete metric space and $Y$ is any metric space. If $f$ is the pointwise limit of $f_n$, i.e.\ $f(\lambda)=\lim_{n\to\infty}f_n(\lambda)$ for each $\lambda\in\Lambda$ (and the limit exists), then the points of continuity of $f$ form a residual subset of $\Lambda$.
\end{theorem}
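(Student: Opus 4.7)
The plan is to prove Theorem \ref{BCT} via the standard oscillation-plus-Baire-category argument, adapted slightly because the target space $Y$ is only a metric space rather than $\R$.

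First I would introduce the oscillation of $f$ at a point $\lambda$,
$$
\omega(f,\lambda)=\inf_{U\ni\lambda}\sup_{\mu,\nu\in U}d_Y(f(\mu),f(\nu)),
$$
where $U$ ranges over open neighbourhoods of $\lambda$. A routine check shows that $f$ is continuous at $\lambda$ iff $\omega(f,\lambda)=0$, and that for every $\epsilon>0$ the set $\{\omega(f,\cdot)<\epsilon\}$ is open. Hence the set of continuity points of $f$ is the $G_\delta$ set $\bigcap_{k\in\N}\{\omega(f,\cdot)<1/k\}$, and it suffices to show that each of these open sets is dense in $\Lambda$.

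Next, for each $\epsilon>0$ and each $n\in\N$ I would define
$$
F_n^\epsilon=\bigcap_{j,k\ge n}\{\lambda\in\Lambda:\ d_Y(f_j(\lambda),f_k(\lambda))\le\epsilon\}.
$$
Each $F_n^\epsilon$ is closed because every $f_j$ is continuous, and pointwise convergence of $f_n(\lambda)$ forces $\{f_n(\lambda)\}$ to be Cauchy, so $\bigcup_n F_n^\epsilon=\Lambda$. Since $\Lambda$ is a complete metric space, the Baire Category Theorem gives that $U_\epsilon:=\bigcup_n\mathrm{int}(F_n^\epsilon)$ is open and dense in $\Lambda$.

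The key estimate is then to show $U_\epsilon\subseteq\{\omega(f,\cdot)\le 6\epsilon\}$. Given $\lambda\in U_\epsilon$, pick $n$ with $\lambda\in\mathrm{int}(F_n^\epsilon)$ and a neighbourhood $V\subseteq\mathrm{int}(F_n^\epsilon)$ of $\lambda$. On $V$ one has $d_Y(f_j(\mu),f_k(\mu))\le\epsilon$ for $j,k\ge n$; letting $k\to\infty$ gives $d_Y(f_n(\mu),f(\mu))\le\epsilon$ for all $\mu\in V$. By continuity of $f_n$, I can shrink to a neighbourhood $V'\subseteq V$ of $\lambda$ on which $d_Y(f_n(\mu),f_n(\lambda))\le\epsilon$, and then the triangle inequality gives $d_Y(f(\mu),f(\lambda))\le 3\epsilon$ for $\mu\in V'$, whence $\omega(f,\lambda)\le 6\epsilon$. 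Finally, intersecting over $\epsilon=1/k$ shows $\bigcap_{k}U_{1/k}$ is residual and contained in the set of continuity points of $f$.

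The only slightly delicate step is the transition from the Cauchy-on-$V$ estimate for the $f_j$ to the oscillation bound for $f$, because one must combine the uniform closeness of $f_n$ to $f$ on $V$ with the continuity of the single function $f_n$ at the reference point $\lambda$; the loss of a constant factor (the $6\epsilon$) is harmless since $\epsilon$ is then sent through the sequence $1/k$.
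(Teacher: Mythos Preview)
Your proposal is correct and follows essentially the same approach as the paper's proof: both define the closed Cauchy sets $F_n^\epsilon$ (the paper calls them $E_n(\delta)$), invoke the Baire Category Theorem to locate open subsets where the tail of the sequence is uniformly close, and then combine $d_Y(f_n,f)\le\epsilon$ on that set with the continuity of the single function $f_n$ to bound the oscillation of $f$. The only cosmetic differences are that you phrase things via the oscillation function and show directly that $\bigcup_n\mathrm{int}(F_n^\epsilon)$ is dense, whereas the paper shows the complementary sets $F_\delta$ are nowhere dense by applying Baire to $\overline{U}$ for an arbitrary open $U$; your constant $6\epsilon$ (versus the paper's $3\delta$) arises simply because your oscillation compares arbitrary pairs rather than a single reference point.
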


Before the proof we make the following observation: if $U$ and $V$ are open subsets of $\Lambda$ with $V\subset\overline U$, then $U\cap V\neq\emptyset$. Otherwise $V^c$, the complement of $V$ in $\Lambda$, is a closed set containing $U$, and since $\overline U$ is the intersection of all closed sets that contain $U$, $\overline U\subset V^c$, a contradiction.

\begin{proof}
  We show that for any $\delta>0$ the set
  $$
  F_\delta=\{\lambda_0\in\Lambda:\ \lim_{\epsilon\to0}\sup_{d_\Lambda(\lambda,\lambda_0)\le\epsilon}d_Y(f(\lambda),f(\lambda_0))\ge3\delta\}
  $$
  is nowhere dense. From this it follows that
  $$
  \cup_{n\in\N}F_{1/n}=\{\mbox{discontinuity points of }f\}
  $$
  is nowhere dense, and so the set of continuity points is residual.

  To show that $F_\delta$ is nowhere dense, i.e.\ that its closure contains no open set, let
  $$
  E_n(\delta)=\{\lambda\in\Lambda:\ \sup_{i,j\ge n}d_Y(f_i(\lambda),f_j(\lambda))\le\delta\}.
  $$
  Note that $E_n$ is closed, $E_{n+1}\supset E_n$, and $\Lambda=\cup_{n=0}^\infty E_n$. Choose any open set $U\subset\Lambda$, and consider $\overline U=\cup_{n=0}^\infty \overline U\cap E_n$. Since $\overline U$ is a complete metric space, it follows from the Baire Category Theorem that there exists an $n$ such that $\overline U\cap E_n$ contains an open set $V'$. From the remark before the proof, $V:=V'\cap U$ is an open subset of $\overline U\cap E_n$ that is in addition a subset of $U$.

  Since $V\subset E_n$, it follows that $d_Y(f_i(\lambda),f_j(\lambda))\le\eta$ for all $\lambda\in V$ and $i,j\ge n$. Fixing $i=n$ and letting $j\to\infty$ it follows that
  $$
  d_Y(f_n(\lambda),f(\lambda))\le\eta\qquad\mbox{for all}\quad\lambda\in V.
  $$
  Now, since $f_n(\lambda)$ is continuous in $\lambda$, for any $\lambda_0\in V$ there is a neighbourhood $N(\lambda_0)\subset V$ such that
  $$
  d_Y(f_n(\lambda),f_n(\lambda_0))\le\eta\qquad\mbox{for all}\quad\lambda\in N(\lambda_0).
  $$
  Thus by the triangle inequality
  $$
  d_Y(f(\lambda_0),f(\lambda))\le 3\eta\qquad\mbox{for all}\quad\lambda\in N(\lambda_0).
  $$
  It follows that no element of $N(\lambda_0)$ belongs to $F_\delta$, which implies, since $N(\lambda_0)\subset V\subset U$ that $U$ contains an open set that is not contained in $F_\delta$. This shows that $F_\delta$ is nowhere dense, which concludes the proof.
\end{proof}

\begin{theorem}\label{main}
  Under assumptions {\rm(L1--3)} above -- or {\rm(L1)}, {\rm(L2$'$)}, and {\rm(L3$'$)} -- $\A_\lambda$ is continuous in $\lambda$ for all $\lambda_0$ in a residual subset of $\Lambda$. In particular the set of continuity points of $\A_\lambda$ is dense in $\Lambda$.
\end{theorem}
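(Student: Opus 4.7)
The plan is to apply Theorem \ref{BCT} directly, with target metric space $Y=CB(X)$ equipped with the Hausdorff distance $d_{\rm H}$. All of the heavy lifting has already been done: Lemma \ref{Dncts} supplies a sequence of continuous maps, equation (\ref{limit}) identifies $\A_\lambda$ as their pointwise limit, and Theorem \ref{BCT} is precisely the abstract Baire-category statement we need.

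First I would fix any sequence $t_n\to\infty$ (for concreteness $t_n=n$) and define $f_n:\Lambda\to CB(X)$ by $f_n(\lambda)=\overline{S_\lambda(t_n)D}$. By Lemma \ref{Dncts}, applied once for each $n$, each $f_n$ is continuous; this is valid under either pair of assumptions (L2)+(L3) or (L2$'$)+(L3$'$), since the lemma covers both cases.

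Next I would define $f:\Lambda\to CB(X)$ by $f(\lambda)=\A_\lambda$. This is well defined: (A1) ensures $\A_\lambda$ is compact, hence lies in $CB(X)$, and (L2) (or (L2$'$)) guarantees that all attractors sit inside the fixed bounded set $D$. The observation (\ref{limit}) recorded before Lemma \ref{Dncts} says exactly that $d_{\rm H}(f_n(\lambda),f(\lambda))\to 0$ as $n\to\infty$ for every $\lambda\in\Lambda$, so $f$ is the pointwise limit of the continuous maps $f_n$.

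Finally I would invoke Theorem \ref{BCT} with this data: $\Lambda$ is a complete metric space by hypothesis, $CB(X)$ is a metric space under $d_{\rm H}$, each $f_n$ is continuous, and $f$ is their pointwise limit. The theorem then yields that the set of continuity points of $f$, that is, of the map $\lambda\mapsto\A_\lambda$, is residual in $\Lambda$; density follows from the Baire Category Theorem applied to the complete metric space $\Lambda$. There is essentially no obstacle here, since the entire substance of the argument is packaged in Lemma \ref{Dncts} and Theorem \ref{BCT}; the only bookkeeping is to verify that $\A_\lambda$ and each $\overline{S_\lambda(t_n)D}$ may be viewed as points of the common metric space $(CB(X),d_{\rm H})$, which is immediate from their definitions.
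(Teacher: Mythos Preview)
Your proposal is correct and follows essentially the same approach as the paper: define $f_n(\lambda)=\overline{S_\lambda(n)D}$, invoke Lemma \ref{Dncts} for continuity, use (\ref{limit}) for pointwise convergence to $f(\lambda)=\A_\lambda$, and apply Theorem \ref{BCT}. The only difference is that you spell out the bookkeeping (that $\A_\lambda$ and $\overline{S_\lambda(n)D}$ lie in $CB(X)$) slightly more explicitly than the paper does.
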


%Before giving the proof we note that in particular
%: given any $\lambda_0\in\Lambda$ and $\epsilon>0$, there is a $\lambda'$ with $d_\Lambda(\lambda',\lambda_0)<\epsilon$ at which $\A_\lambda$ is continuous.% at $\lambda'$.

\begin{proof}
We showed in Lemma \ref{Dncts} that for every $t>0$ the map $\lambda\mapsto \overline{S_\lambda(n)D}$ is continuous from $\Lambda$ into $BC(X)$, and observed in (\ref{limit}) the pointwise convergence
  $$
 d_{\rm H}(\overline{S_\lambda(t)D},\A_\lambda)\to0\qquad\mbox{as}\quad t\to\infty.
  $$
 The result follows immediately from Theorem \ref{BCT}, setting $f_n(\lambda)=\overline{S_\lambda(n)D}$ and $f(\lambda)=A_\lambda$ for every $\lambda\in\Lambda$.
\end{proof}
%
%\begin{theorem}
%   Let $X$ be a compact topological space and suppose that $G_n:X\to 2^Y$ satisfies (i) $G_{n+1}(x)\subseteq G_n(x)$; (ii) $d_{\rm H}(G_n(x),\mathscr G(x))\to0$ as $n\to\infty$ for each $x\in X$; and (iii) $G:X\to 2^Y$ is continuous. Then $G_n$ converges uniformly to $G$.
%\end{theorem}
%
%\begin{proof}
%  Given $\epsilon>0$ let $E_n$ be the set of those $x\in X$ such that
%  $$
%  d_{\rm H}(G_n(x),G(x))<\epsilon.
%  $$
%  Since $G_n$ and $G$ are both continuous, $E_n$ is open. Since .... !!?
%\end{proof}

Residual continuity results also hold for the pullback attractors \cite{CLR} and uniform attractors \cite{CV} that occur in non-autonomous systems. We will discuss these results in the context of the two-dimensional Navier--Stokes equations in a future paper.

\end{document}